\newtheorem{lem}{Lemma}[section]
\newtheorem{cor}[lem]{Corollary}
\newtheorem{teo}[lem]{Theorem}
\newtheorem{os}[lem]{Remark}
\newtheorem{prop}[lem]{Proposition}
\newenvironment{proof}{{\sc{Proof.}}}{\hfill\qed}
\newcommand{\qed}{\thinspace\null\nobreak\hfill\hbox{\vbox{\kern-.2pt\hrule
			height.2pt depth.2pt\kern-.2pt\kern-.2pt \hbox to2.5mm{\kern-.2pt\vrule
				width.4pt \kern-.2pt\raise2.5mm\vbox to.2pt{}\lower0pt\vtop
				to.2pt{}\hfil\kern-.2pt \vrule
				width.4pt \kern-.2pt}\kern-.2pt\kern-.2pt\hrule height.2pt depth.2pt
			\kern-.2pt}}\par\medbreak}
\newcommand{\R}{\mathbb{R}}
\newcommand{\N}{\mathbb{N}}
\date{}
\begin{document}

\title{
Gaussian Poincar\'{e} inequalities  on the half-space with singular weights
}
\author{L. Negro \thanks{Dipartimento di Matematica e Fisica ``Ennio De Giorgi'', Universit\`a del Salento, C.P.193, 73100, Lecce, Italy.
e-mail:  luigi.negro@unisalento.it} \qquad C. Spina \thanks{Dipartimento di Matematica e Fisica``Ennio De Giorgi'', Universit\`a del Salento, C.P.193, 73100, Lecce, Italy.
e-mail:  chiara.spina@unisalento.it}}

\maketitle
\begin{abstract}
\noindent 
We prove  Rellich-Kondrachov type theorems and weighted Poincar\'{e} inequalities on  the half-space $\R^{N+1}_+=\{z=(x,y): x \in \R^N, y>0\}$ endowed with the weighted Gaussian measure $\mu :=y^ce^{-a|z|^2}dz$ where   $c+1>0$ and $a>0$. We prove that for some positive constant $C>0$ one has 
\begin{align*}
	\left\|u-\overline u\right\|_{L^2_\mu(\R^{N+1}_+)}\leq  C \|\nabla u\|_{L^2_\mu (\R^{N+1}_+)},\qquad \forall u\in H^1_\mu(\R^{N+1}_+)
\end{align*}
where  $\overline u=\frac 1{\mu(\R^{N+1}_+)}\int_{\R^{N+1}_+} u\,d\mu(z)$.
Besides this we also consider the local case  of bounded domains of $\R^{N+1}_+$ where the measure $\mu$ is $y^cdz$.

\bigskip\noindent
Mathematics subject classification (2020): 35K08, 35K67,  47D07, 35J70, 35J75, 35B65, 35k08.
\par

\noindent Keywords: degenerate elliptic operators, boundary degeneracy, Rellich-Kondrachov theorems, weighted Poincar\'{e} inequalities, kernel estimates.
\end{abstract}

\section{Introduction}
Poincar\'{e} inequalities are very powerful tools in mathematical analysis which have been extensively
used for the study of PDEs. They
also are  of great  interest in relation with optimal transport, for the study of some aspects of the
topology of some abstract spaces and also in probabilistic problems.  For instance, they can be used for proving existence and  regularity of solutions to certain PDEs as well as Harnack's inequalities,
see the seminal works by De Giorgi \cite{DeGiorgi}, Nash \cite{Nash} and Moser \cite{Moser1, Moser2}.   Counterparts of the Poincar\'{e} inequalities in a weighted setting have gained a lot of attention in the last decades  due to their connection with  the regularity of solutions  to degenerate or singular PDEs  which depend on the particular structure of the weight under consideration (see e.g. \cite{Fabes-Kenig-Serapioni, Franchi-Guti-Wheeden,audrito2024nodalsetsolutionsclass, Heinonen} and references therein).

Following these ideas, in this paper   we extend some recent   results proved in   \cite{audrito2024nodalsetsolutionsclass} by   studying   Rellich-Kondrachov type theorems and weighted Poincar\'{e} inequalities on  the half-space $\R^{N+1}_+=\{z=(x,y): x \in \R^N, y>0\}$ endowed with the weighted Gaussian measure $\mu :=y^ce^{-a|z|^2}dz$ where   $c+1>0$ and $a>0$. Our main results are  Theorems  \ref{Compactness H^1_cg} and  \ref{Poincare weight gauss} where   we prove the compactness of   the immersion $H^1_\mu(\R^{N+1}_+)\hookrightarrow L^2_\mu(\R^{N+1}_+)$ and the validity, for some constant $C>0$, of the  inequality   
\begin{align}\label{eq intr}
	\left\|u-\overline u\right\|_{L^2_\mu(\R^{N+1}_+)}\leq  C \|\nabla u\|_{L^2_\mu (\R^{N+1}_+)},\qquad \forall u\in H^1_\mu(\R^{N+1}_+)
\end{align}
 where    $\overline u=\frac 1{\mu(\R^{N+1}_+)}\int_{\R^{N+1}_+} u\,d\mu(z)$.
Here we set  $L^2_\mu\left(\R^{N+1}_+\right):=L^2(\R^{N+1}_+,d\mu(z))$ and  $$H^{1}_{\mu}(\R^{N+1}_+):=\{u \in L^2_{\mu}(\R^{N+1}_+) : \nabla u \in L^2_{\mu}(\R^{N+1}_+)\}.$$ 
Besides this we also consider the local case  of bounded domains of $\R^{N+1}_+$, see Corollary \ref{Poincare local} and Proposition \ref{Poincare local mean}, where the measure $\mu$ is equivalent to  the weighted measure $\nu=y^cdz$. We  remark that in   \cite[Theorem 4.7, Lemma 5.5]{audrito2024nodalsetsolutionsclass}  the same results are proved in  the smaller  range $-1<c<1$, where the weight $y^c$ is in the Muckenhoupt class $A_2$. The  strategy therein  employed by the authors is based  on a spectral decomposition of the space $L^2_\mu(\R^{N+1}_+)$ which also allows   to obtain  optimal constants. On the other hand our approach   relies  on the derivation of   some suitable weighted Hardy-type inequalities and  has the advantages of being easily applied in the whole range $c>-1$, losing  although  the optimality of the constants.

The motivations for studying the validity of the Poincar\'{e} inequalities \eqref{eq intr} are due to their essential  role in proving, using the method of Nash \cite{Nash,Fabes-Stroock}, Harnack's inequalities and lower heat kernel  estimates for  the   singular   operator 
 \begin{equation*}\label{def L}
	\mathcal L =\sum_{i,j=1}^{N+1}q_{ij}D_{ij}+c\frac{D_y}{y}
\end{equation*}
in  $\R^{N+1}_+$, under Neumann boundary conditions at $y=0$.  Here  $Q\in\R^{N+1,N+1}$ is an elliptic  matrix and $c\in\R$ satisfies $\frac c\gamma +1>0$ where $\gamma=q_{N+1,N+1}$. The interest in this class of singular  operators has grown in the last decade as they appear extensively  in the literature in 
both pure and applied problems  also in connection with  the theory of nonlocal operators (see e.g.  the introductions in \cite{Negro-AlphaDirichlet, dong2020parabolic,Vita2024schauder} and references therein). 
	In the special case $Q=I_{N+1}$ and  $\mathcal L=\Delta_x+D_{yy}+c\frac{D_y}{y}$, these operators  play a major role in the investigation of the fractional powers $(-\Delta_x)^s$ and  $(D_t-\Delta_x)^s$, $s=(1-c)/2$, through the  ``extension procedure" of Caffarelli and Silvestre,  see \cite{Caffarelli-Silvestre} and \cite{Stinga-Torrea-Extension,GaleMiana-Extension,Arendt-ExtensionProblem}. 

We exploit these aspects in \cite{Negro-Spina-SingularKernel-Lower} where, using the Poincar\'{e} inequality \eqref{eq intr}, we prove  that the heat kernel $p_{{\mathcal L}}$ of $\mathcal L$, written  with respect the measure $y^\frac{c}{\gamma}dz$, satisfies the lower Gaussian estimates
\begin{align*}
 p_{{\mathcal  L}}(t,z_1,z_2)
	\geq C t^{-\frac{N+1}{2}} y_1^{-\frac{c}{2\gamma}} \left(1\wedge \frac {y_1}{\sqrt t}\right)^{\frac{c}{2\gamma}} y_2^{-\frac{c}{2\gamma}} \left(1\wedge \frac{y_2}{\sqrt t}\right)^{\frac{c}{2\gamma}}\,\exp\left(-\dfrac{|z_1-z_2|^2}{kt}\right),
\end{align*} 
where $t>0$, $z_1=(x_1,y_1),\ z_2=(x_2,y_2)\in\R^{N+1}_+$ and $C,k$ are some positive constants.  Here it is essential to prove  \eqref{eq intr} in the whole range $c>-1$.
 Similar upper estimates  are proved in \cite{Negro-Spina-SingularKernel} using suitable weighted  Gagliardo-Nirenberg type inequalities. We also remark that elliptic and parabolic solvability of the associated problems in weighted $L^p$ spaces have been  investigated in \cite{MNS-Caffarelli,MNS-CompleteDegenerate,MNS-Singular-Half-Space,MNS-Degenerate-Half-Space, Negro-AlphaDirichlet}   where it is  proved that, under suitable assumptions on $m$ and $p$,  $\mathcal L$ generates an analytic semigroup in $L^p(\R^{N+1}_+; y^m dxdy)$ by describing also  its domain.

\medskip
 We now describe the structure of the paper and the strategy for proving \eqref{eq intr}.

We start by considering, in Section \ref{Poincarè Q }, the local case in $H^1_c(Q)$, where $Q$ is a bounded domain of $R^{N+1}_+$ and  the measure $\mu$ is equivalent to  the weighted measure $\nu=y^cdz$.  Here for simplicity we suppose  $Q=Q_x\times (0,b)$ where $Q_x$ is a bounded open connected subset of $R^N$ and $b>0$ but all the results of this section remain valid also for bounded open connected $Q\subsetneq \R^{N+1}_+$ having Lipschitz boundary. This condition is only required in order to guarantee the  validity of the divergence Theorem and allow integration by parts.   Here we prove   a Rellich-Kondrachov type theorem which assures the compactness of the embedding $H_c^{1}\left(Q\right)\hookrightarrow L^2_c\left(Q\right)$. Here the main difficulty is the singularity or the degeneracy at $y=0$ of the measure $y^cdz$ and to overcome this obstruction we provide some weighted Hardy type inequalities which are of independent interest and which are proved  using integration by parts. We then derive the validity of the Poincar\'{e} inequalities in $H^1_c(Q)$    from the compactness of the embedding  $H_c^{1}\left(Q\right)\hookrightarrow L^2_c\left(Q\right)$ as in the classical theory.   We also prove similar results in the space $H_c^{1,0}\left(Q\right)$ where we impose Dirichlet boundary conditions at $y=b$.

Then we focus,  in Section \ref{Poincarè gaussian }, on the  half-space $\R^{N+1}_+=\{z=(x,y): x \in \R^N, y>0\}$ endowed with the weighted Gaussian measure $\mu :=y^ce^{-a|z|^2}dz$. We adopt a similar strategy but this time we have also to control the behaviour of the density as $|z|\to\infty$. Then using the previous local results and some suitable weighted Hardy type inequalities for large values of $x$ and $y$, we prove  the compactness of the embedding $H^1_\mu(\R^{N+1}_+)\hookrightarrow L^2_\mu(\R^{N+1}_+)$.  The  Poincar\'{e} inequalities in \eqref{eq intr}  follows then similarly as in Section \ref{Poincarè Q }.

\bigskip
\noindent\textbf{Notation.} For $N \ge 0$, $\R^{N+1}_+=\{(x,y): x \in \R^N, y>0\}$. We write $\nabla u$ for the gradient of a function $u$ with respect to all $x,y$ variables and $\nabla_x u, D_y u$ to distinguish the role of $x$ and $y$.
For $c+1>0$ we consider the measure $\nu=y^c dx dy $ in $\R^{N+1}_+$ and  we write $L^2_c(\R_+^{N+1})$, and often only $L^2_c$ when $\R^{N+1}_+$ is understood, for  $L^2(\R_+^{N+1}; y^c dx dy)$. Similarly, for $a>0$, we consider the measure $\mu=y^ce^{-a|z|^2}dz$ and we write $L^2_\mu\left(\R^{N+1}_+\right):=L^2(\R^{N+1}_+, y^ce^{-a|z|^2}dz)$.
Given $a$ and $b$ $\in\R$, $a\wedge b$, $a \vee b$  denote  their minimum and  maximum. We  write $f(x)\simeq g(x)$ for $x$ in a set $I$ and positive $f,g$, if for some $C_1,C_2>0$ 
\begin{equation*}
	C_1\,g(x)\leq f(x)\leq C_2\, g(x),\quad x\in I.
\end{equation*}
If $\mathcal C$ is set of functions defined in $\R^{N+1}_+$ then we denote by $\mathcal C_{\vert Q}$ the set of their restrictions to $Q\subseteq \R^{N+1}_+$. We often write $B^N(0,r)=\{x\in\R^N:|x|\leq r\}$.

\bigskip
\noindent\textbf{Acknowledgment.}
The authors are members of the INDAM (``Istituto Nazionale di Alta Matematica'') research group GNAMPA (``Gruppo Nazionale per l’Analisi Matematica, la Probabilità
e le loro Applicazioni'').

\section{Weighted Poincar\'{e} inequality on bounded domains} \label{Poincarè Q }

We work in the space $L^2_c\left(\R^{N+1}_+\right):=L^2(\R^{N+1}_+, y^c\, dxdy)$ and we assume, throughout the paper, $c+1>0$, so that the measure $d\nu=y^c\, dx\, dy$ is locally finite on $\R^{N+1}_+=\{(x,y): x \in \R^N, y>0\}$. We define the Sobolev space 
$$H^{1}_{c}(\R^{N+1}_+):=\{u \in L^2_{c}(\R^{N+1}_+) : \nabla u \in L^2_{c}(\R^{N+1}_+)\}$$
equipped with the inner product
\begin{align*}
	\left\langle u, v\right\rangle_{H^1_{c}(\R^{N+1}_+)}:= \left\langle u, v\right\rangle_{L^2_{c}(\R^{N+1}_+)}+\left\langle \nabla u, \nabla v\right\rangle_{L^2_c(\R^{N+1}_+)}.
\end{align*} 
Condition $c+1>0$ assures , by \cite[Lemma 11.1]{MNS-Caffarelli}, that any function $u\in  H^{1}_{c}(\R^{N+1}_+)$ has a finite trace $u_0=u(\cdot,0)$ at $y=0$. Moreover, by \cite[Theorem 4.9 and 6.1]{MNS-Sobolev}, the set 
\begin{equation} \label{defC}
	\mathcal{C}:=\left \{u \in C_c^\infty \left(\R^N\times[0, \infty)\right), \ D_y u(x,y)=0\  {\rm for} \ y \leq \delta\ {\rm  and \ some\ } \delta>0\right \},
\end{equation}
is dense in $H^1_c(\R^{N+1}_+)$.
\begin{os}\label{aprrox compact support}
	If a function $u\in H^1_c\left(\R^{N+1}_+\right)$ has  support in $B^N(0,b)\times[0,b]$, then there exists a sequence $\left(u_n\right)_{n\in\N}\in\mathcal C$  such that $ \mbox{supp }u_n\subseteq B^N(0,b)\times[0,b]$  and  $u_n\to u$ in $H^1_c(\R^{N+1}_+)$. For $y$-compact supported functions this follows from  \cite[Remark 4.14]{MNS-Sobolev} but the   same proof of \cite[Theorem 4.9]{MNS-Sobolev} adapted to the simpler case  $H^1_c\left(\R^{N+1}_+\right)$  gives compactness also in the $x$-variable.
\end{os}

\bigskip

We now focus on domains $Q\subsetneq \R^{N+1}_+$.  Here for simplicity we suppose  $Q=Q_x\times (0,b)$ where $Q_x$ is an open subset of $\R^N$ and $b>0$ but all the results of this section are valid for bounded open connected $Q\subsetneq \R^{N+1}_+$ having Lipschitz boundary. This condition is only required in order to guarantee the  validity of the divergence Theorem  and to allow integration by parts. 

As for the global case, we define analogously $L^2_c\left(Q\right):=L^2(Q, y^c\, dxdy)$ and similarly  $	H^{1}_c(Q)=\left\{u\in L^2_c(Q)\ :\  \nabla
u\in L^2_c(Q)\right\}$.  As before, by \cite[Lemma 11.1]{MNS-Caffarelli},  any function $u\in  H^{1}_{c}(Q)$ has  finite traces $u(\cdot,y)$ for any  $0\leq y\leq b$.
We  impose also a Dirichlet boundary condition at $y=b$ by defining
\begin{align*}
	H^{1,0}_c(Q)&=\left\{u\in L^2_c(Q)\ :\  \nabla
	u\in L^2_c(Q)\quad \text{and} \quad u=0 \text{ on } y=b\right\}.
\end{align*}

%

%
%
%

 We start by proving some Hardy-type inequalities for functions in $H^{1,0}_c(Q)$.
 \begin{prop}\label{local Hardy type} 
 	Let $c+1>0$ and let  $0\leq\alpha\leq 1$ such that $\alpha<\frac{c+1}2$. Then  one has 
 	\begin{align*}
 		\left\|\frac u{y^\alpha}\right\|_{L^2_c(Q)}\leq \frac{2 b^{1-\alpha}}{c+1-2\alpha}\left\|D_y u\right\|_{L^2_c(Q)}, \qquad \forall u\in H^{1,0}_c(Q).
 	\end{align*}
 \end{prop}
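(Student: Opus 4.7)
The plan is to establish the inequality by an integration-by-parts argument in the $y$-variable, exploiting the hypothesis $\alpha<(c+1)/2$, which guarantees that the exponent $c-2\alpha+1$ is strictly positive.

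By a density argument based on the class $\mathcal C$ defined in \eqref{defC}, suitably modified so that the approximating functions also vanish in a neighborhood of $y=b$, it suffices to prove the bound for smooth $u$ satisfying $D_y u=0$ near $y=0$ and $u=0$ near $y=b$. The key observation is that on $(0,b)$ one can write $y^{c-2\alpha}=\frac{d}{dy}\bigl(\frac{y^{c-2\alpha+1}}{c-2\alpha+1}\bigr)$, where the division is legitimate by assumption. Applying this to $\int_0^b u^2(x,y)\,y^{c-2\alpha}\,dy$ and integrating by parts in $y$, the boundary term at $y=b$ vanishes by the Dirichlet condition, while the one at $y=0$ vanishes because $c-2\alpha+1>0$ and $u$ is constant in $y$ near $0$ for the approximants. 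This yields
\begin{align*}
\int_0^b u^2 y^{c-2\alpha}\,dy=-\frac{2}{c-2\alpha+1}\int_0^b u\,D_y u\,y^{c-2\alpha+1}\,dy.
\end{align*}

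Next I would factor $y^{c-2\alpha+1}=y^{(c-2\alpha)/2}\cdot y^{c/2}\cdot y^{1-\alpha}$, use $\alpha\leq 1$ to bound $y^{1-\alpha}\leq b^{1-\alpha}$, integrate in $x\in Q_x$, and apply the Cauchy--Schwarz inequality on $Q$ to obtain
\begin{align*}
\left\|\frac{u}{y^\alpha}\right\|_{L^2_c(Q)}^2\leq\frac{2\,b^{1-\alpha}}{c-2\alpha+1}\left\|\frac{u}{y^\alpha}\right\|_{L^2_c(Q)}\|D_y u\|_{L^2_c(Q)}.
\end{align*}
Dividing through by $\|u/y^\alpha\|_{L^2_c(Q)}$ (treating the trivial vanishing case separately) yields the claimed estimate with constant $2b^{1-\alpha}/(c+1-2\alpha)$, and the density argument passes the inequality to the full space $H^{1,0}_c(Q)$.

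The main technical obstacle is justifying the integration by parts at the singular boundary $y=0$; this is precisely why the density reduction to smooth approximants with $D_y u$ vanishing near $y=0$ is essential, since it removes any ambiguity in the limit of $u^2(x,y)\,y^{c-2\alpha+1}$ as $y\to 0^+$. Once this is secured the remainder is a routine Cauchy--Schwarz computation.
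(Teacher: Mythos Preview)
Your proposal is correct and follows essentially the same route as the paper: integrate by parts in $y$ using $y^{c-2\alpha}=(c+1-2\alpha)^{-1}D_y(y^{c+1-2\alpha})$, bound $y^{1-\alpha}\le b^{1-\alpha}$, and apply Cauchy--Schwarz. The only cosmetic differences are that the paper works directly with $u\in H^{1,0}_c(Q)$ (justifying the vanishing of the boundary term at $y=0$ via the finite-trace property rather than a density reduction to $\mathcal C$) and concludes via Young's inequality instead of dividing through, arriving at the identical constant.
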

\begin{proof}
	Let $u\in\ H^{1,0}_c(Q)$. Then integrating by parts we have
	\begin{align*}
		&\int_{Q}|u|^2y^{c-2\alpha}\,dy=\frac{1}{c+1-2\alpha}\int_{Q_x}\int_{0}^b|u|^2D_y\left(y^{c+1-2\alpha}\right)\,dydx\\[1ex]
		&=-\frac{2}{c+1-2\alpha}\int_{Q_x}\int_{0}^buD_y u\,y^{c+1-2\alpha}\,dy+\frac{1}{c+1-2\alpha}\int_{Q_x}\Big[|u(x,y)|^2y^{c+1-2\alpha}\Big]^{y=b}_{y=0}dx.
	\end{align*}
Recalling that, by definition,  $u=0$ on $y=b$ and that  has finite trace at $y=0$ and since $c+1-2\alpha>0$, the last term satisfies
\begin{align*}
\Big[|u(x,y)|^2y^{c+1-2\alpha}\Big]^{y=b}_{y=0}=0.
\end{align*}
Then using H\"older inequality and since $y^{1-\alpha}\leq b^{1-\alpha}$, the previous equality implies
\begin{align*}
	\int_{Q}|u|^2y^{c-2\alpha}\,dy&\leq \frac{2}{c+1-2\alpha}\left\|\frac u {y^\alpha}\right\|_{L^2_c(Q)}\left\|D_y u\,y^{1-\alpha}\right\|_{L^2_c(Q)}\\
	&\leq \frac{2b^{1-\alpha}}{c+1-2\alpha}\left\|\frac u {y^\alpha}\right\|_{L^2_c(Q)}\left\|D_y u\right\|_{L^2_c(Q)}.
\end{align*}
Using, for $\epsilon>0$, the Young inequality 
$$2\left\|\frac u {y^\alpha}\right\|_{L^2_c(Q)}\left\|D_y u\right\|_{L^2_c(Q)}\leq\epsilon \left\|\frac u {y^\alpha}\right\|_{L^2_c(Q)}^2+\frac 1 \epsilon \left\|D_y u\right\|_{L^2_c(Q)}^2$$
 and choosing $\epsilon$ such that $\frac{\epsilon b^{1-\alpha}}{c+1-2\alpha}=\frac 1 2$ we obtain 
 \begin{align*}
 	\left\|\frac u {y^\alpha}\right\|_{L^2_c(Q)}^2\leq \frac{4 b^{2-2\alpha}}{(c+1-2\alpha)^2}\left\|D_y u\right\|_{L^2_c(Q)}^2
 \end{align*}
which is the required claim.
\end{proof}

Choosing  $\alpha=0$ in Proposition \ref{local Hardy type},  we deduce  a Poincar\'{e} type inequality for functions in $H^{1,0}_c(Q)$.

\begin{cor}\label{Poincare local}
	Let $c+1>0$. Then for some positive constant $C>0$  one has
	\begin{align*} 
		\left\|u\right\|_{L^2_c(Q)}\leq \frac{2 b}{c+1}\left\|D_y u\right\|_{L^2_c(Q)},\qquad \forall u\in H^{1,0}_c(Q).
	\end{align*}
\end{cor}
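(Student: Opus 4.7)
The statement follows by direct specialization of Proposition \ref{local Hardy type}, so my plan is essentially a one-line verification. First I would check that $\alpha=0$ is an admissible choice in the hypothesis of that proposition: we need $0\leq \alpha\leq 1$, which is clear, and $\alpha<\frac{c+1}{2}$, which reduces to $c+1>0$, precisely the standing assumption of the corollary.

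Having verified admissibility, I would simply substitute $\alpha=0$ into the conclusion of Proposition \ref{local Hardy type}. The left-hand side becomes $\left\|u/y^0\right\|_{L^2_c(Q)}=\|u\|_{L^2_c(Q)}$, while the constant on the right specializes to $\frac{2 b^{1-0}}{c+1-0}=\frac{2b}{c+1}$, yielding exactly
\begin{align*}
\|u\|_{L^2_c(Q)}\leq \frac{2b}{c+1}\,\|D_y u\|_{L^2_c(Q)},\qquad \forall u\in H^{1,0}_c(Q).
\end{align*}

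There is no real obstacle here; all the work has already been done in Proposition \ref{local Hardy type} via the integration by parts argument that exploits the Dirichlet trace vanishing at $y=b$ and the integrability of the boundary term at $y=0$ (which uses $c+1-2\alpha>0$, trivially true at $\alpha=0$). The only thing to note is that the use of the Dirichlet boundary condition at $y=b$ is essential: without it, the boundary term $[|u(x,y)|^2 y^{c+1}]_{y=0}^{y=b}$ invoked in Proposition \ref{local Hardy type} need not vanish, and the conclusion can fail on constants. Hence the statement is restricted to $H^{1,0}_c(Q)$, and the proof is immediate.
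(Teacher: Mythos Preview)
Your proposal is correct and matches the paper's own argument exactly: the corollary is stated as an immediate consequence of Proposition~\ref{local Hardy type} by taking $\alpha=0$, and the paper gives no further proof. Your check that $\alpha=0$ is admissible under $c+1>0$ and the substitution into the constant $\frac{2b^{1-\alpha}}{c+1-2\alpha}$ are precisely what is intended.
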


We can now prove a  Rellich-Kondrachov type theorem which assures the compactness of the embedding $H_c^{1,0}\left(Q\right)\hookrightarrow L^2_c\left(Q\right)$.

\begin{prop}\label{Compactness H^1_c}
Let $Q=Q_x\times(0,b)$ where $Q_x$ is a bounded open subset of $\R^N$ and let $c+1>0$. Then the immersion $H_c^{1,0}\left(Q\right)\hookrightarrow L^2_c\left(Q\right)$ is compact.
\end{prop}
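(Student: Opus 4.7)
The plan is to combine the weighted Hardy inequality of Proposition \ref{local Hardy type} (which controls $u$ near the singular boundary $y=0$) with the classical Rellich--Kondrachov theorem (which gives compactness away from $y=0$), and then glue the two regions by a diagonal argument.

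Let $(u_n)\subset H^{1,0}_c(Q)$ be bounded. For each $\delta\in(0,b)$ set $Q_\delta:=Q_x\times(\delta,b)$. On $Q_\delta$ the weight $y^c$ is bounded above and below by positive constants depending only on $\delta, b, c$, so $(u_n)$ is bounded in the unweighted space $H^1(Q_\delta)$. Assuming $Q_x$ has sufficient regularity for Rellich--Kondrachov to apply to $Q_\delta$ (which is the setting of the section), we can extract a subsequence that converges in $L^2(Q_\delta)$, hence in $L^2_c(Q_\delta)$.

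For the region near $y=0$, fix any $\alpha$ with $0<\alpha<\min\{1,(c+1)/2\}$; such $\alpha$ exists precisely because $c+1>0$. Proposition \ref{local Hardy type} applied to $u_n\in H^{1,0}_c(Q)$ yields
\begin{align*}
\int_{Q_x\times(0,\delta)}|u_n|^2 y^c\,dy\,dx
=\int_{Q_x\times(0,\delta)}|u_n|^2 y^{c-2\alpha}\,y^{2\alpha}\,dy\,dx
\leq \delta^{2\alpha}\left\|\frac{u_n}{y^\alpha}\right\|_{L^2_c(Q)}^2
\leq C\,\delta^{2\alpha}\|D_y u_n\|_{L^2_c(Q)}^2,
\end{align*}
with $C=C(b,c,\alpha)$. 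Since $\|u_n\|_{H^1_c(Q)}$ is bounded, the right-hand side is bounded uniformly in $n$ by $M\delta^{2\alpha}$ for some constant $M>0$.

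Now I carry out the diagonal extraction: for each $k\in\N$ apply the first step with $\delta=1/k$ and extract from the previous subsequence a further subsequence converging in $L^2_c(Q_{1/k})$. The diagonal subsequence, still denoted $(u_n)$, converges in $L^2_c(Q_{1/k})$ for every $k$. For any $\delta\in(0,b)$ and any $n,m$,
\begin{align*}
\|u_n-u_m\|_{L^2_c(Q)}^2 \leq \|u_n-u_m\|_{L^2_c(Q_\delta)}^2 + 2\bigl(\|u_n\|_{L^2_c(Q_x\times(0,\delta))}^2+\|u_m\|_{L^2_c(Q_x\times(0,\delta))}^2\bigr)\leq \|u_n-u_m\|_{L^2_c(Q_\delta)}^2+4M\delta^{2\alpha}.
\end{align*}
Given $\eps>0$, first choose $\delta=1/k$ so small that $4M\delta^{2\alpha}<\eps/2$, and then $n,m$ so large that the first term is $<\eps/2$ using convergence in $L^2_c(Q_\delta)$. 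This proves $(u_n)$ is Cauchy in $L^2_c(Q)$, hence compactness.

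The only delicate point is securing a positive $\alpha$ in the Hardy inequality; this is exactly where the hypothesis $c+1>0$ is used, and it is the mechanism that tames both the degeneracy ($c>0$) and the singularity ($-1<c<0$) of the weight at $y=0$. Everything else is a standard Rellich-plus-diagonalization argument.
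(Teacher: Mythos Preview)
Your proof is correct and follows essentially the same approach as the paper: use Proposition~\ref{local Hardy type} with a small admissible $\alpha>0$ to obtain the uniform tail estimate near $y=0$, and invoke the classical Rellich--Kondrachov theorem on $Q_x\times(\delta,b)$ where the weight is harmless. The only cosmetic difference is that the paper phrases the conclusion as total boundedness of the unit ball (which spares the diagonal extraction), whereas you argue sequential compactness via a diagonal subsequence; the two formulations are equivalent and the core mechanism is identical.
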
 
\begin{proof}
	Let $u$ be  in the unit ball 
$\mathcal{B}$ of $H_c^{1,0}\left(Q\right)$ and fix $0<\epsilon<b$. Then, choosing a sufficiently small   $0<\alpha\leq 1$ such that $\alpha<\frac{c+1}2$ and using Proposition \ref{local Hardy type}, one has
\begin{align*}
	\int_{Q_x\cap [0,\epsilon]}|u|^2\, y^c\,dz\leq \epsilon^{2\alpha} \int_{Q_x\cap [0,\epsilon]}\frac{|u|^2}{y^{2\alpha}}\, y^c\,dz \leq  \epsilon^{2\alpha}\frac{4 b^{2-2\alpha}}{(c+1-2\alpha)^2}.
\end{align*}
Since $L^2_c\left(Q_x\times[\epsilon,b]\right)=L^2\left(Q_x\times[\epsilon,b],\,dz\right)$, the compactness of $\mathcal{B}_{\vert Q_x\times[\epsilon,b]}$ in $L^2_c\left(Q_x\times[\epsilon,b]\right)$ is classical. This fact and the above estimate show that $\mathcal{B}$ is totally bounded.
\end{proof}

We  also prove, similarly, the compactness of the embedding $H_c^{1}\left(Q\right)\hookrightarrow L^2_c\left(Q\right)$.

 \begin{prop}\label{local Hardy type ver 2} 
	Let $c+1>0$ and let  $0\leq\alpha\leq 1$ such that $\alpha<\frac{c+1}2$. Then  for some positive constant $C>0$ one has 
	\begin{align*}
		\left\|\frac u{y^\alpha}\right\|_{L^2_c(Q)}\leq C\left\|u\right\|_{H^1_c(Q)}, \qquad \forall u\in H^{1}_c(Q).
	\end{align*}
\end{prop}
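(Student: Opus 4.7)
The plan is to reduce this to the Dirichlet version, Proposition \ref{local Hardy type}, by means of a cutoff in the $y$-variable. Since the obstruction compared with Proposition \ref{local Hardy type} is precisely that $u$ need not vanish at $y=b$, I will split the integral into the region close to $y=0$, where the singular weight $y^{-2\alpha}$ is dangerous, and the region bounded away from $y=0$, where this weight is harmless.

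More precisely, I fix a smooth function $\eta:[0,b]\to[0,1]$ with $\eta\equiv 1$ on $[0,b/2]$ and $\eta\equiv 0$ in a neighborhood of $y=b$, extended trivially in the $x$-variable. Given $u\in H^1_c(Q)$, the product $\eta u$ belongs to $H^{1,0}_c(Q)$ (the boundary trace at $y=b$ vanishes because $\eta$ does), and $D_y(\eta u)=\eta\, D_y u+\eta'\,u$ lies in $L^2_c(Q)$ with
\begin{align*}
\|D_y(\eta u)\|_{L^2_c(Q)}\leq \|D_y u\|_{L^2_c(Q)}+\|\eta'\|_\infty\,\|u\|_{L^2_c(Q)}.
\end{align*}
Applying Proposition \ref{local Hardy type} to $\eta u$ then gives a bound of $\|\eta u/y^\alpha\|_{L^2_c(Q)}$ in terms of $\|u\|_{H^1_c(Q)}$.

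For the first region $Q_x\times(0,b/2]$ I use $\eta\equiv 1$ to write
\begin{align*}
\int_{Q_x\times(0,b/2]}\frac{|u|^2}{y^{2\alpha}}\,y^c\,dz\leq\int_{Q}\frac{|\eta u|^2}{y^{2\alpha}}\,y^c\,dz,
\end{align*}
and the right-hand side is controlled by $\|u\|_{H^1_c(Q)}^2$ via the previous step. For the second region $Q_x\times[b/2,b)$ the weight satisfies $y^{-2\alpha}\leq (b/2)^{-2\alpha}$, so
\begin{align*}
\int_{Q_x\times[b/2,b)}\frac{|u|^2}{y^{2\alpha}}\,y^c\,dz\leq (b/2)^{-2\alpha}\|u\|_{L^2_c(Q)}^2.
\end{align*}
Summing the two contributions yields the claim.

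I expect no real obstacle here: the argument is a standard cutoff localization, and the only point that has to be checked carefully is that $\eta u$ genuinely lies in $H^{1,0}_c(Q)$, which follows at once from the smoothness of $\eta$ and from the existence of traces on $y=b$ guaranteed by \cite[Lemma 11.1]{MNS-Caffarelli}. The constant $C$ produced this way depends on $c,\alpha,b$ and on $\|\eta'\|_\infty$, but not on $u$, which is all that is required.
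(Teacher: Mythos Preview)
The proposal is correct and follows essentially the same approach as the paper: both introduce a cutoff $\eta$ equal to $1$ on $[0,b/2]$ and vanishing near $y=b$, apply Proposition \ref{local Hardy type} to $\eta u\in H^{1,0}_c(Q)$, and handle the remaining contribution on $[b/2,b)$ trivially using $y^{-\alpha}\leq (2/b)^{\alpha}$. The only cosmetic difference is that the paper splits the function as $u=\eta u+(1-\eta)u$ whereas you split the domain of integration, but the estimates are identical.
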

\begin{proof}
Let $\eta\in C^\infty_c([0,b[)$ be a cut-off function such that $\eta(y)=1$, if $0\leq y\leq \frac b2$ and let us set
\begin{align*}
	u=\eta u+(1-\eta) u:=u_1+u_2.
\end{align*}
 Since $ u_1\in H^{1,0}_c\left(Q\right)$, we can apply Proposition \ref{local Hardy type} thus obtaining for some positive constant $C>0$ (which may be different in each occurrence)
\begin{align*}
	\left\|\frac {u_1}{y^\alpha}\right\|_{L^2_c(Q)}&\leq C\left\|D_y u_1 \right\|_{L^2_c( Q)}\leq C\|u\|_{H^{1}_c\left(Q\right)}.
\end{align*}
On the other hand  since $ \mbox{supp}(u_2)\subseteq \overline{Q_x}\times [\frac b 2,b ]$, we have trivially 
$$\left\|\frac {u_2}{y^\alpha}\right\|_{L^2_c(Q)}=\left\|\frac {u_2}{y^\alpha}\right\|_{L^2_c( Q_x\times (\frac b 2,b))}\leq\left(\frac 2{b}\right)^{\alpha} \left\|u\right\|_{L^2_c(Q)}$$
which added to the previous inequality implies
\begin{align*}
	\left\|\frac {u}{y^\alpha}\right\|_{L^2_c(Q)}
	&\leq C\left\| u\right\|_{H^1_c(Q)}
\end{align*}
	which is the required claim.
\end{proof}

\begin{prop}\label{Compactness H^1_c ver mean}
	Let $Q=Q_x\times(0,b)$ where $Q_x$ is a bounded open subset of $\R^N$ and let $c+1>0$. Then the immersion $H_c^{1}\left(Q\right)\hookrightarrow L^2_c\left(Q\right)$ is compact.
\end{prop}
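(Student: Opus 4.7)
The plan is to mimic the proof of Proposition \ref{Compactness H^1_c} almost verbatim, replacing the use of Proposition \ref{local Hardy type} (which required the Dirichlet trace at $y=b$) by Proposition \ref{local Hardy type ver 2}, which controls $\|u/y^\alpha\|_{L^2_c(Q)}$ by the full $H^1_c(Q)$-norm without any boundary condition. Concretely, let $\mathcal{B}$ denote the unit ball of $H^1_c(Q)$ and fix $0<\epsilon<b$.

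First, I would isolate the behaviour near the singular boundary $y=0$. Choosing $0<\alpha\leq 1$ with $\alpha<(c+1)/2$ and applying Proposition \ref{local Hardy type ver 2}, for every $u\in\mathcal{B}$ one has
\begin{align*}
\int_{Q_x\times(0,\epsilon)}|u|^2\, y^c\,dz\leq \epsilon^{2\alpha}\int_Q \frac{|u|^2}{y^{2\alpha}}\,y^c\,dz\leq C^2\,\epsilon^{2\alpha},
\end{align*}
with $C$ the constant from Proposition \ref{local Hardy type ver 2}. This bound is uniform over $\mathcal{B}$ and can be made arbitrarily small by choosing $\epsilon$ small.

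Second, on the strip $Q_x\times[\epsilon,b]$ the weight $y^c$ is bounded above and below by positive constants, so $L^2_c(Q_x\times[\epsilon,b])$ and $H^1_c(Q_x\times[\epsilon,b])$ are equivalent, with equivalent norms, to their unweighted counterparts $L^2(Q_x\times[\epsilon,b])$ and $H^1(Q_x\times[\epsilon,b])$. The restrictions $\mathcal{B}_{\vert Q_x\times[\epsilon,b]}$ therefore form a bounded set in $H^1(Q_x\times[\epsilon,b])$, and the classical Rellich--Kondrachov theorem (applicable since $Q_x\times[\epsilon,b]$ has Lipschitz boundary under our standing assumption on $Q$) gives the compactness of this set in $L^2(Q_x\times[\epsilon,b])$, hence in $L^2_c(Q_x\times[\epsilon,b])$.

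Combining the two estimates: given $\delta>0$, pick $\epsilon$ so that $C^2\epsilon^{2\alpha}<\delta^2/4$, then cover the (relatively compact) restriction $\mathcal{B}_{\vert Q_x\times[\epsilon,b]}$ by finitely many $L^2_c$-balls of radius $\delta/2$. The resulting finite cover shows $\mathcal{B}$ is totally bounded in $L^2_c(Q)$, and hence precompact, proving the compactness of the embedding. The only non-routine step is the one already done, namely the Hardy-type estimate of Proposition \ref{local Hardy type ver 2}; the rest is a standard truncation-plus-classical-Rellich argument.
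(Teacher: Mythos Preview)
Your proof is correct and follows essentially the same approach as the paper: control the mass near $y=0$ via Proposition \ref{local Hardy type ver 2}, then invoke the classical Rellich--Kondrachov theorem on $Q_x\times[\epsilon,b]$ where the weight is nondegenerate. The only difference is that you spell out the total-boundedness step with an explicit choice of $\epsilon$ and a finite cover, while the paper leaves this implicit.
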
 
\begin{proof} We proceed as in the proof of Proposition \ref{Compactness H^1_c}.
	Let $u$ be  in the unit ball 
	$\mathcal{B}$ of $H_c^{1}\left(Q\right)$ and fix $0<\epsilon<b$. Then, choosing a sufficiently small   $0<\alpha\leq 1$ such that $\alpha<\frac{c+1}2$ and using Proposition \ref{local Hardy type ver 2}, one has
	\begin{align*}
		\int_{Q_x\cap [0,\epsilon]}|u|^2\, y^c\,dz\leq \epsilon^{2\alpha} \int_{Q_x\cap [0,\epsilon]}\frac{|u|^2}{y^{2\alpha}}\, y^c\,dz \leq C \epsilon^{2\alpha}.
	\end{align*}
	Since $L^2_c\left(Q_x\times[\epsilon,b]\right)=L^2\left(Q_x\times[\epsilon,b],\,dz\right)$ and $H^1_c\left(Q_x\times[\epsilon,b]\right)=H^1\left(Q_x\times[\epsilon,b],\,dz\right)$, the compactness of $\mathcal{B}_{\vert Q_x\times[\epsilon,b]}$ in $L^2_c\left(Q_x\times[\epsilon,b]\right)$ is classical. This fact  and the above estimate show that $\mathcal{B}$ is totally bounded.
\end{proof}

We can then  deduce a Poincar\'{e} type inequality for functions in $H^{1}_c(Q)$.

\begin{prop}\label{Poincare local mean}
	Let $c+1>0$ and let $Q=Q_x\times(0,b)$ where $Q_x$ is a bounded open connected subset of $\R^N$. Then, setting $\nu=y^cdz$, one has for some positive constant $C>0$
	\begin{align*}
		\left\|u-\overline u\right\|_{L^2_c(Q)}\leq  C \|\nabla u\|_{L^2_c(Q)},\qquad \forall u\in H^{1}_c(Q),\quad \text{where}\quad \overline u=\frac 1{\nu(Q)}\int_Q u\,y^cdz.
	\end{align*}
\end{prop}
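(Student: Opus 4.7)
The plan is to deduce the Poincar\'{e} inequality from the compactness of the immersion $H^1_c(Q)\hookrightarrow L^2_c(Q)$ established in Proposition \ref{Compactness H^1_c ver mean}, via the standard contradiction argument adapted to the weighted setting.

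First I would argue by contradiction: suppose the stated inequality fails, so that there exists a sequence $(u_n)_{n\in\N}\subset H^1_c(Q)$ for which $\|u_n-\overline{u_n}\|_{L^2_c(Q)}>n\,\|\nabla u_n\|_{L^2_c(Q)}$. Replacing $u_n$ by $v_n:=(u_n-\overline{u_n})/\|u_n-\overline{u_n}\|_{L^2_c(Q)}$ I obtain a normalized sequence satisfying
\begin{align*}
\int_Q v_n\,d\nu=0,\qquad \|v_n\|_{L^2_c(Q)}=1,\qquad \|\nabla v_n\|_{L^2_c(Q)}<\tfrac{1}{n}.
\end{align*}
In particular $(v_n)$ is bounded in $H^1_c(Q)$.

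Next I would invoke Proposition \ref{Compactness H^1_c ver mean} to extract a subsequence, still denoted by $(v_n)$, converging to some $v\in L^2_c(Q)$ in the $L^2_c$-norm, with $\|v\|_{L^2_c(Q)}=1$. Since $\nabla v_n\to 0$ in $L^2_c(Q)$, the sequence $(v_n,\nabla v_n)$ is Cauchy in $L^2_c(Q)^{N+2}$, so in fact $v_n\to v$ in $H^1_c(Q)$ and $\nabla v=0$ almost everywhere in $Q$.

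The key remaining point is to conclude that $v$ must be constant. On the open set $Q\subset\R^{N+1}_+$ the weight $y^c$ is bounded and bounded away from zero on every compact subset (recall $y>0$ in $Q$), so $H^1_c(Q)$ coincides with $H^1_{\rm loc}(Q,dz)$ on such compacta; thus $\nabla v=0$ holds in the classical distributional sense on $Q$. Since $Q=Q_x\times(0,b)$ is open and connected, $v$ equals a constant almost everywhere. Finally, passing to the limit in the mean-value condition yields $\int_Q v\,d\nu=\lim_n\int_Q v_n\,d\nu=0$, and since $v$ is constant this forces $v\equiv 0$, contradicting $\|v\|_{L^2_c(Q)}=1$. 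The main (minor) obstacle is precisely the verification that a function with zero weighted gradient is constant on $Q$, but this reduces at once to the classical case because $y^c$ is nondegenerate away from $\{y=0\}$ and $Q$ lies strictly inside $\R^{N+1}_+$.
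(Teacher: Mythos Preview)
Your proof is correct and follows essentially the same contradiction argument as the paper, invoking the compactness result of Proposition~\ref{Compactness H^1_c ver mean} and concluding via a normalized sequence whose gradient vanishes in the limit. The only differences are cosmetic: you observe that the convergence is in fact strong in $H^1_c(Q)$ (since $\nabla v_n\to 0$) rather than merely weak, and you spell out why $\nabla v=0$ forces $v$ to be constant on the connected set $Q$, a point the paper takes for granted.
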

\begin{proof}
	The proof is an adaptation of the standard proof of the  Poincar\'{e}-Wirtinger inequality (see e.g. \cite[Section 5.8, Theorem 1, page 275]{evans-Book-PDE}).   We argue by contradiction and we assume that there exists a sequence $\left(u_n\right)_{n\in\N}\subseteq H^{1}_c(Q)$ such that 
	\begin{align*}
		\left\|u_n\right\|_{L^2_c(Q)}>  n \|\nabla u_n\|_{L^2_c(Q)}.
	\end{align*} 
Up to replace $u_n$ with $u_n-\overline {u}_n$, we can suppose $\overline u_n=0$.   Furthermore, up to renormalize, we can  assume, without any loss of generality,  $\left\|u_n\right\|_{L^2_c(Q)}=1$, which implies 
\begin{align}\label{Poincare mean1}
	 \|\nabla u_n\|_{L^2_c (Q)}<\frac 1 n.
\end{align} By Proposition \ref{Compactness H^1_c ver mean}, there exists a subsequence $(u_{n_k})\subseteq (u_n)$ which converges strongly to a function $u\in L^2_c (Q)$  and weakly in $H^1_c (Q)$. The limit $u$ then  satisfies clearly  $\|u\|_{L^2_c(Q)}=1$ and, using \eqref{Poincare mean1}, $u\in H^{1}_c(Q)$ and moreover  $\nabla u=0$.   This  implies  then $u$ to be constant and, since $\overline u=0$, we must have $u=0$ which contradicts $\|u\|_{L^2_c(Q)}=1$. 
\end{proof}

\section{Weighted Gaussian Poincar\'{e} inequality on $\R^{N+1}_+$} \label{Poincarè gaussian }
In this section we prove   Rellich-Kondrachov type theorems  and  Poincar\'{e} type inequalities in weighted Gaussian spaces. We fix $c+1>0$ and $a>0$ and we consider the weighted  Gaussian measure $\mu :=y^ce^{-a|z|^2}dz$ on $\R^{N+1}_+$ which, from the assumptions on $a$ and $c$, is  finite on $\R^{N+1}_+$. By standard computation one has
\begin{align*}
	\mu\left(\R^{N+1}_+\right)=\int_{\R^{N+1}_+} y^c e^{-a|z|^2}\,dz= a^{-\frac{N+1+c}{2}}\pi^{\frac N2}\Gamma\left(\frac{c+1}2\right)
\end{align*} 
where $\Gamma$ is the Gamma function. Indeed
\begin{align*}
	\int_{\R^{N+1}_+} y^c e^{-a|z|^2}\,dz&=\int_{\R^{N}} e^{-a|x|^2}\,dx\,\int_0^\infty y^c e^{-ay^2}\,dy\\[1ex]
	&\overset{s=ay^2}{=} \left(\frac{\pi}{a}\right)^{\frac N2}\int_0^\infty s^{\frac {c-1}2} e^{-s}\,dy\,a^{-\frac{c+1}2}=a^{-\frac{N+1+c}{2}}\pi^{\frac N2}\Gamma\left(\frac{c+1}2\right).
\end{align*} We work in the space $L^2_\mu\left(\R^{N+1}_+\right):=L^2(\R^{N+1}_+, y^ce^{-a|z|^2}dz)$ and we define the Sobolev space 
$$H^{1}_{\mu}(\R^{N+1}_+):=\{u \in L^2_{\mu}(\R^{N+1}_+) : \nabla u \in L^2_{\mu}(\R^{N+1}_+)\}$$
equipped with the inner product
\begin{align*}
	\left\langle u, v\right\rangle_{H^1_{\mu}(\R^{N+1}_+)}:= \left\langle u, v\right\rangle_{L^2_{\mu}(\R^{N+1}_+)}+\left\langle \nabla u, \nabla v\right\rangle_{L^2_\mu(\R^{N+1}_+)}.
\end{align*} 
As in the previous section, condition $c+1>0$ assures, by \cite[Lemma 11.1]{MNS-Caffarelli}, that any function $u\in  H^{1}_{\mu}(\R^{N+1}_+)$ has a finite trace $u_0=u(\cdot,0)$ at $y=0$. 

The next results  show the density  of smooth functions in $H^1_\mu(\R^{N+1}_+)$.

\begin{prop}\label{core Gaussian}
	 The set  
	\begin{equation*} 
		\mathcal{C}:=\left \{u \in C_c^\infty \left(\R^N\times[0, \infty)\right), \ D_y u(x,y)=0\  {\rm for} \ y \leq \delta\ {\rm  and \ some\ } \delta>0\right \}
	\end{equation*}
	is dense in $H^1_\mu(\R^{N+1}_+)$.
\end{prop}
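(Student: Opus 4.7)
The plan is to combine a Gaussian-weighted cut-off argument with the local density result quoted in Remark \ref{aprrox compact support}. Let $u\in H^1_\mu(\R^{N+1}_+)$; I want to produce a sequence in $\mathcal C$ converging to $u$ in the $H^1_\mu$ norm.

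\textbf{Step 1 (reduction to compact support).} I fix $\eta\in C_c^\infty([0,\infty))$ with $0\leq\eta\leq 1$, $\eta\equiv 1$ on $[0,1]$ and $\eta\equiv 0$ on $[2,\infty)$, and set $\eta_n(z):=\eta(|z|/n)$, so that $\eta_n\equiv 1$ on the ball of radius $n$, $\eta_n\equiv 0$ outside the ball of radius $2n$, and $\|\nabla \eta_n\|_\infty\leq C/n$. Let $u_n:=\eta_n u$. Because $\mu$ is a finite measure and $|u|^2,|\nabla u|^2\in L^1_\mu$, dominated convergence gives $(1-\eta_n)u\to 0$ and $(1-\eta_n)\nabla u\to 0$ in $L^2_\mu$. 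Moreover
\begin{align*}
\|u\,\nabla\eta_n\|_{L^2_\mu(\R^{N+1}_+)}^2\leq \frac{C^2}{n^2}\int_{\{|z|\geq n\}}|u|^2\,d\mu\longrightarrow 0.
\end{align*}
Hence $u_n\to u$ in $H^1_\mu(\R^{N+1}_+)$, and each $u_n$ has support contained in $B^N(0,2n)\times[0,2n]$.

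\textbf{Step 2 (local approximation).} Fix $n$ and set $b:=2n$. On the bounded set $B^N(0,b)\times[0,b]$ the Gaussian factor satisfies $e^{-ab^2}\leq e^{-a|z|^2}\leq 1$, so on the subspace of functions supported in this set the norms $\|\cdot\|_{H^1_\mu}$ and $\|\cdot\|_{H^1_c}$ are equivalent. In particular $u_n\in H^1_c(\R^{N+1}_+)$ with support in $B^N(0,b)\times[0,b]$. Remark \ref{aprrox compact support} then furnishes a sequence $(v_{n,k})_{k\in\N}\subseteq\mathcal C$ with $\mbox{supp}(v_{n,k})\subseteq B^N(0,b)\times[0,b]$ and $v_{n,k}\to u_n$ in $H^1_c$, and by the norm equivalence the same convergence holds in $H^1_\mu$.

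\textbf{Step 3 (diagonal extraction).} For each $n$ pick $k_n$ with $\|v_{n,k_n}-u_n\|_{H^1_\mu}<1/n$; then $v_{n,k_n}\in\mathcal C$ and $v_{n,k_n}\to u$ in $H^1_\mu(\R^{N+1}_+)$, proving density. The only potential subtlety is the quantitative control on $\|u\,\nabla\eta_n\|_{L^2_\mu}$, which is handled cleanly precisely because $\mu$ is finite and integrable at infinity; the rest is a mechanical reduction to the already established local statement.
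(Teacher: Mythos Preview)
Your proof is correct and follows essentially the same two-step strategy as the paper: a cut-off argument to reduce to compactly supported functions, followed by the observation that on bounded sets $\mu$ and $y^c\,dz$ are equivalent so that Remark \ref{aprrox compact support} applies. The only cosmetic differences are that you use a single radial cut-off $\eta(|z|/n)$ whereas the paper uses a product cut-off $\eta_n(x)\phi_n(y)$, and your lower bound on the Gaussian factor should read $e^{-2ab^2}$ rather than $e^{-ab^2}$ since $|z|^2=|x|^2+y^2\leq 2b^2$ on $B^N(0,b)\times[0,b]$; neither point affects the argument.
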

\begin{proof}
Let us prove preliminarily that functions in  $H^1_\mu(\R^{N+1}_+)$ having  compact support in $\overline{\R^{N+1}_+}$  are dense in  $H^1_\mu(\R^{N+1}_+)$. Let  $u\in H^1_\mu(\R^{N+1}_+)$.
 Let $0\leq\phi\leq 1$ be a smooth function depending only on the $y$ variable which is equal to $1$ in $(0,1)$ and to $0$ for $y \ge 2$ and set $\phi_n(y)=\phi \left(\frac{y}{n}\right)$. Similarly let  $0\leq\eta\leq 1$ be a smooth function depending only on the $x$ variable which is equal to $1$ if $|x|\leq 1$ and to $0$ for $|x| \ge 2$ and set $\eta_n(x)=\phi \left(\frac{x}{n}\right)$.
 Let  $u_n(x,y)=\eta_n(x)\phi_n(y)u(x,y)$. Then $u_n\in H^1_\mu(\R^{N+1}_+)$ and has compact support  in $B^N(0,2n) \times [0,2n]$ where we set $B^N(0,r)=\{x\in\R^N:|x|\leq r\}$. By the dominated convergence theorem we have $u_n \to u$ in $L^2_{\mu}(\R^{N+1}_+)$. Concerning the derivatives we observe that 
\begin{align*}
	D_y u_n =\eta_n\phi_n D_y u+ D_y \phi_n \eta_n u,\qquad |D_y \phi_n|\leq \frac{C}{n}\chi_{[n,2n]}\\
	\nabla_xu_n=\eta_n\phi_n \nabla_xu+ \nabla_x\eta_n\phi_n u,\qquad |\nabla_x \eta_n(x)|\leq \frac{C}{n}\chi_{[n,2n]}(|x|)
\end{align*}
which implies  $D_y u_n \to D_y u$, $ \nabla_{x}u_n\to \nabla_{x}u$  in $L^2_{\mu}(\R^{N+1}_+)$ by the dominated convergence theorem again.

Let us now prove  that   $\mathcal C$  is dense in  $H^1_\mu(\R^{N+1}_+)$.
Let  $u\in H^1_\mu(\R^{N+1}_+)$. By the previous step we may assume that $u$ has support in  $B^N(0,b) \times [0,b[$ for some $b>0$. By  \cite[Theorem 6.1]{MNS-Sobolev} and  Remark \ref{aprrox compact support} there exists $u_n\in\mathcal C$ such that $u_n\to u$ in $H^1_c(B^N(0,b) \times [0,b[)$. Since the measure $\mu$ is equivalent to the measure $y^c dz$ in  $B^N(0,b) \times [0,b[$ we then have $H^1_c(B^N(0,b) \times [0,b[)=H^1_\mu (B^N(0,b) \times [0,b[)$. This proves the required claim.
\end{proof}

\begin{os}
	As in Remark \ref{aprrox compact support}, the above proof shows that if  $u\in H^1_\mu\left(\R^{N+1}_+\right)$ has  support in $B^N(0,b)\times[0,b]$, then there exists a sequence $\left(u_n\right)_{n\in\N}\in\mathcal C$  such that $ \mbox{supp }u_n\subseteq B^N(0,b)\times[0,b]$  and  $u_n\to u$ in $H^1_\mu(\R^{N+1}_+)$. Moreover, in the range $c\geq 1$, by using  \cite[Proposition 11.4]{MNS-Caffarelli} in place of \cite[Theorem 6.1]{MNS-Sobolev}, the same proof of Proposition \ref{core Gaussian} proves that $C_c^\infty\left(\R^{N+1}_+\right)$ is dense in $H^1_\mu(\R^{N+1}_+)$
\end{os}

We state now some weighted Hardy-type inequalities for functions in $H^1_\mu(\R^{N+1}_+)$. The following Proposition is an adaptation on $\R^{N+1}_+$ of a classical result for Gaussian measure, see e.g.\cite[Lemma 8.5.2]{Bertoldi_Lorenzi}.

\begin{prop}\label{Hardy type gaussian x}
For some positive constant $C>0$   one has
\begin{align*}
	\left\|u|x| \right\|_{L^2_\mu(\R^{N+1}_+)}&\leq C\left( \left\|\nabla_x u\right\|_{L^2_\mu(\R^{N+1}_+)}+\left\| u\right\|_{L^2_\mu(\R^{N+1}_+)}\right), \qquad \forall u\in H^1_\mu(\R^{N+1}_+).
\end{align*}
\end{prop}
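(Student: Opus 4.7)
The plan is to mimic the classical Gaussian Hardy inequality on $\R^N$ (as in \cite[Lemma 8.5.2]{Bertoldi_Lorenzi}), exploiting that the $x$-weight $e^{-a|x|^2}$ decouples from $y^c$ so no boundary terms in $y$ enter. First I would reduce to $u\in\mathcal{C}$ using Proposition \ref{core Gaussian}: since $\mathcal{C}$ is dense in $H^1_\mu(\R^{N+1}_+)$ and elements of $\mathcal{C}$ have compact support (so $\|u|x|\|_{L^2_\mu}<\infty$ is automatic), it is enough to establish the inequality on $\mathcal{C}$ and then pass to the limit.

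For $u\in\mathcal{C}$ I would use the key identity $x_i\,e^{-a|z|^2}=-\frac{1}{2a}D_{x_i}e^{-a|z|^2}$ to write
\begin{align*}
\int_{\R^{N+1}_+}|u|^2|x|^2\,d\mu
=-\frac{1}{2a}\sum_{i=1}^{N}\int_{\R^{N+1}_+}|u|^2 x_i\,y^c\,D_{x_i}\bigl(e^{-a|z|^2}\bigr)\,dz
\end{align*}
and then integrate by parts in each $x_i$-variable. Because $u$ has compact support in $x$ and $y^c$ has no $x_i$-dependence, no boundary terms appear, and one obtains
\begin{align*}
\int_{\R^{N+1}_+}|u|^2|x|^2\,d\mu
=\frac{1}{a}\int_{\R^{N+1}_+}u\,(x\cdot\nabla_x u)\,d\mu
+\frac{N}{2a}\int_{\R^{N+1}_+}|u|^2\,d\mu.
\end{align*}
Applying Cauchy--Schwarz to the first term gives $\frac{1}{a}\|u|x|\|_{L^2_\mu}\|\nabla_x u\|_{L^2_\mu}$, and then Young's inequality lets me absorb a factor of $\frac{1}{2}\|u|x|\|_{L^2_\mu}^2$ into the left-hand side, yielding
\begin{align*}
\|u|x|\|_{L^2_\mu}^2\leq \frac{1}{a^2}\|\nabla_x u\|_{L^2_\mu}^2+\frac{N}{a}\|u\|_{L^2_\mu}^2,
\end{align*}
which is the desired bound on $\mathcal{C}$ (with an explicit constant).

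Finally, for general $u\in H^1_\mu(\R^{N+1}_+)$, choose $u_n\in\mathcal{C}$ with $u_n\to u$ in $H^1_\mu$. The right-hand side for $u_n$ converges, hence $\|u_n|x|\|_{L^2_\mu}$ is uniformly bounded; extracting a subsequence with $u_n\to u$ pointwise a.e.\ and applying Fatou's lemma gives the inequality for $u$. The main (minor) obstacle is precisely this last step: one has no a priori guarantee that $\|u|x|\|_{L^2_\mu}$ is finite for $u\in H^1_\mu$, so the density of $\mathcal{C}$ together with a Fatou argument is essential to transfer the estimate from the smooth compactly supported class to the full Sobolev space.
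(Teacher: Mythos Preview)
Your proposal is correct and follows essentially the same route as the paper: reduce to $u\in\mathcal C$ via Proposition~\ref{core Gaussian}, integrate by parts in $x$ using $x_i e^{-a|z|^2}=-\frac{1}{2a}D_{x_i}e^{-a|z|^2}$ to obtain the identity $\int|u|^2|x|^2\,d\mu=\frac{1}{a}\int u(x\cdot\nabla_x u)\,d\mu+\frac{N}{2a}\int|u|^2\,d\mu$, and absorb via Cauchy--Schwarz and Young. The only difference is that you spell out the Fatou-type limit argument to pass from $\mathcal C$ to $H^1_\mu$, which the paper leaves implicit.
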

\begin{proof}
By Proposition \ref{core Gaussian}, we can assume, without any loss of generality, $u\in\mathcal C$.	Integrating by parts one has 
	\begin{align*}
		\int_{\R^{N+1}_+}|u|^2|x|^2\,d\mu(z)&=\int_{\R^{N+1}_+}|u|^2|x|^2y^{c}e^{-a|z|^2}\,dydx\\[1ex]
		&=-\frac{1}{2a}\int_{\R^{N+1}_+}|u|^2\left(x,\nabla_x(e^{-a|z|^2}) \right)\,y^cdydx=\frac{1}{2a}\int_{\R^{N+1}_+}\mbox{div}_x\left(|u|^2  x\right)\,d\mu(z)\\[1ex]
		&=\frac{1}{a}\int_{\R^{N+1}_+}u\left(\nabla_x u,x\right)\,d\mu(z)+\frac{N}{2a}\int_{\R^{N+1}_+} |u|^2\,d\mu(z).
	\end{align*}
	Then using the H\"older and the  Young inequalities, the previous equality implies
	\begin{align*}
		\int_{\R^{N+1}_+}|u|^2|x|^2\,d\mu(z)&\leq \frac{1}{a}\left\|u|x|\right\|_{L^2_\mu(\R^{N+1}_+)}\left\|\nabla_x u\right\|_{L^2_\mu(\R^{N+1}_+)}+\frac{N}{2a}\int_{\R^{N+1}_+} |u|^2\,d\mu(z)\\[1ex]
		&\leq\frac{\epsilon}{a} \left\|u|x|\right\|_{L^2_\mu(\R^{N+1}_+)}^2+\frac 1 {a\epsilon} \left\|\nabla_x u\right\|_{L^2_\mu(\R^{N+1}_+)}^2+\frac{N}{2a}\left\| u\right\|_{L^2_\mu(\R^{N+1}_+)}^2.
	\end{align*}
	choosing $\frac \epsilon a =\frac 1 2 $ we obtain the required claim.
\end{proof}

In the following Lemma we restrict ourselves to functions vanishing at the boundary $y=0$. 

\begin{lem}\label{Lemma Hardy y grandi}
For some positive constant $C>0$   one has
\begin{align*}
	\left\|uy \right\|_{L^2_\mu(\R^{N+1}_+)}&\leq C\left( \left\|D_y u\right\|_{L^2_\mu(\R^{N+1}_+)}+\left\| u\right\|_{L^2_\mu(\R^{N+1}_+)}\right),\qquad \forall  u\in C^\infty_c(\R^{N+1}_+).
\end{align*}
\end{lem}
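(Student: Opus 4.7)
The plan is to imitate the proof of Proposition \ref{Hardy type gaussian x}, but with the integration by parts performed in the $y$ variable instead of in $x$. The key observation is that $y\,e^{-a|z|^2} = -\frac{1}{2a}D_y\big(e^{-a|z|^2}\big)$, which lets us trade a factor of $y$ for a $y$-derivative of the Gaussian density.

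More precisely, starting from
\begin{align*}
\int_{\R^{N+1}_+} |u|^2 y^2 \, d\mu(z) = -\frac{1}{2a}\int_{\R^{N+1}_+} |u|^2\, y^{c+1} D_y\big(e^{-a|z|^2}\big)\,dz,
\end{align*}
I would integrate by parts in $y$. Since $u\in C^\infty_c(\R^{N+1}_+)$, the support of $u$ is compact and contained in the open half-space, so the boundary contributions at $y=0$ and $y=+\infty$ both vanish and the Lipschitz/integrability issues near $y=0$ caused by the weight $y^c$ do not arise. The result is
\begin{align*}
\int_{\R^{N+1}_+}|u|^2 y^2\,d\mu(z) = \frac{1}{a}\int_{\R^{N+1}_+} u\, D_y u\, y\, d\mu(z) + \frac{c+1}{2a}\int_{\R^{N+1}_+}|u|^2\,d\mu(z).
\end{align*}

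Then I would apply H\"older and the Young inequality $2ab\leq \epsilon a^2+\epsilon^{-1} b^2$ to the first term with the choice $\epsilon=a/2$, so that the $\|uy\|_{L^2_\mu}^2$ term on the right-hand side is absorbed into the left. This produces
\begin{align*}
\tfrac{1}{2}\|uy\|_{L^2_\mu(\R^{N+1}_+)}^2 \leq \tfrac{2}{a^2}\|D_y u\|_{L^2_\mu(\R^{N+1}_+)}^2 + \tfrac{c+1}{2a}\|u\|_{L^2_\mu(\R^{N+1}_+)}^2,
\end{align*}
which yields the desired estimate after taking square roots and using $\sqrt{A+B}\leq \sqrt{A}+\sqrt{B}$.

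The only mildly delicate point is justifying the integration by parts, and this is precisely why the statement is restricted to $C^\infty_c(\R^{N+1}_+)$ rather than to the full space $H^1_\mu(\R^{N+1}_+)$: compact support inside the open half-space trivializes both boundary terms and any issue with the singular weight $y^c$ at $y=0$ when $-1<c<0$. Extending this inequality to a larger class of functions (which one presumably wants later in order to exploit it in the compactness/Poincar\'e argument on $\R^{N+1}_+$) would require a separate approximation step treating the trace at $y=0$, but that is outside the scope of the current lemma.
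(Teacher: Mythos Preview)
Your proof is correct and follows essentially the same route as the paper's: integrate by parts in $y$ using $y\,e^{-a|z|^2}=-\frac{1}{2a}D_y(e^{-a|z|^2})$, then absorb the mixed term via H\"older and Young with $\epsilon=a/2$, exactly as in Proposition~\ref{Hardy type gaussian x}. Your additional remarks on why the restriction to $C^\infty_c(\R^{N+1}_+)$ trivializes the boundary terms are accurate and make explicit what the paper leaves implicit.
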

\begin{proof}
We proceed  as in the proof of Proposition \ref{Hardy type gaussian x}. We integrate by parts thus obtaining
 \begin{align*}
 	\int_{\R^{N+1}_+}|u|^2y^2\,d\mu(z)&=\int_{\R^{N+1}_+}|u|^2y^{c+2}e^{-a|z|^2}\,dydx 	=-\frac{1}{2a}\int_{\R^{N+1}_+}|u|^2y^{c+1}D_y(e^{-a|z|^2})\,dydx\\[1ex]
 	&=\frac{1}{a}\int_{\R^{N+1}_+}uD_y u\,y\,d\mu(z)+\frac{c+1}{2a}\int_{\R^{N+1}_+} |u|^2\,d\mu(z).
 \end{align*}
 The conclusion then follows using the H\"older and the  Young inequalities as in the previous proposition.
\end{proof}

We now consider, for $\alpha\in\R$, the weight
\begin{align}\label{weight in Hardy}
	y\left(1\wedge y\right)^{-\alpha-1}=\begin{cases}
		y,\quad &y\geq 1,\\[1ex]
		y^{-\alpha},\quad &y<1,
	\end{cases},\qquad \forall y>0.
\end{align}

\begin{prop}\label{Hardy type gaussian y}
	Let $c+1>0$ and let  $0\leq\alpha\leq 1$ such that $\alpha<\frac{c+1}2$. Then for some positive constant $C>0$   one has
	\begin{align*}
	\left\|u y\left(1\wedge y\right)^{-\alpha-1} \right\|_{L^2_\mu(\R^{N+1}_+)}\leq C\left\|u\right\|_{H^1_\mu(\R^{N+1}_+)}, \qquad \forall u\in H^1_\mu(\R^{N+1}_+).
\end{align*}
\end{prop}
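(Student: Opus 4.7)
My plan is to split $u$ by a smooth $y$-cutoff and to handle the two pieces by two already-proven Hardy estimates. First I would fix $\eta\in C^\infty([0,\infty))$ with $0\leq\eta\leq 1$, $\eta\equiv 1$ on $[0,1]$ and $\eta\equiv 0$ on $[2,\infty)$, and write $u=u_1+u_2$ with $u_1=\eta u$ and $u_2=(1-\eta)u$. In view of the explicit form \eqref{weight in Hardy}, the square of the left-hand side equals $\int_{\{y<1\}}|u|^2 y^{-2\alpha}\,d\mu+\int_{\{y\geq 1\}}|u|^2 y^2\,d\mu$; on $\{y<1\}$ one has $u=u_1$, while on $\{y\geq 1\}$ one has $u=u_1\chi_{\{1\leq y\leq 2\}}+u_2$, and the contribution of $u_1\chi_{\{1\leq y\leq 2\}}$ is trivially bounded by $2\|u\|_{L^2_\mu}$. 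So the task reduces to the two inequalities
\begin{align*}
\|u_1\, y^{-\alpha}\|_{L^2_\mu}\leq C\|u\|_{H^1_\mu},\qquad \|u_2\, y\|_{L^2_\mu}\leq C\|u\|_{H^1_\mu}.
\end{align*}

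For the first I would exploit that $u_1(x,\cdot)$ vanishes at $y=2$ for a.e.\ $x$. Reproducing the one-dimensional integration by parts of Proposition \ref{local Hardy type} in the $y$-variable alone gives
\begin{align*}
\int_0^2|u_1(x,y)|^2 y^{c-2\alpha}\,dy\leq C(c,\alpha)\int_0^2|D_y u_1(x,y)|^2 y^c\,dy,
\end{align*}
a pointwise-in-$x$ estimate with a constant independent of $x$. I would multiply by $e^{-a|x|^2}$, integrate over $\R^N$, and use that on the bounded-$y$ slab $\R^N\times(0,2)$ the weights $e^{-a|z|^2}$ and $e^{-a|x|^2}$ are comparable, obtaining $\|u_1 y^{-\alpha}\|_{L^2_\mu}\leq C\|D_y u_1\|_{L^2_\mu}$. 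Since $D_y u_1=\eta' u+\eta D_y u$ with $\eta,\eta'$ bounded, the right-hand side is bounded by $C\|u\|_{H^1_\mu}$.

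For the second piece I would first approximate $u_2$ in $H^1_\mu$ by functions $v_n\in C_c^\infty(\R^{N+1}_+)$. Since $u_2$ has support in $\{y\geq 1\}$, a truncation in $x$ and $y$ at infinity followed by mollification with a standard mollifier of radius smaller than the $y$-distance of the support to $\{y=0\}$ produces such a sequence; convergence in $H^1_\mu$ follows from dominated convergence and from the comparability of $\mu$ with Lebesgue measure on compact subsets of $\R^{N+1}_+$. I would then apply Lemma \ref{Lemma Hardy y grandi} to each $v_n$, obtaining $\|v_n y\|_{L^2_\mu}\leq C(\|D_y v_n\|_{L^2_\mu}+\|v_n\|_{L^2_\mu})$, which stays uniformly bounded by $C'\|u\|_{H^1_\mu}$, and conclude via Fatou's lemma on a pointwise a.e.\ convergent subsequence.

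The step I expect to be most delicate is this approximation of $u_2$ by smooth compactly supported functions in the open half-space: the subtle point is that $C_c^\infty(\R^{N+1}_+)$ is not itself dense in $H^1_\mu$ in general (Proposition \ref{core Gaussian} requires the larger class $\mathcal C$, which allows non-vanishing traces at $\{y=0\}$), so the argument relies crucially on the fact that $u_2$ is supported away from the degenerate set. The remainder is routine bookkeeping: a single cutoff in $y$, the one-dimensional integration by parts already carried out in Proposition \ref{local Hardy type}, and the comparison of the Gaussian weight with its $y$-independent truncation on a bounded $y$-slab.
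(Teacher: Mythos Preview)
Your proposal is correct and follows essentially the same route as the paper: split $u$ via a smooth $y$-cutoff, control the small-$y$ piece by the local Hardy inequality of Proposition~\ref{local Hardy type} together with the comparability of $e^{-ay^2}$ with a constant on a bounded $y$-slab, and control the large-$y$ piece by Lemma~\ref{Lemma Hardy y grandi}. The only cosmetic difference is that the paper first reduces to $u\in\mathcal C$ via Proposition~\ref{core Gaussian}, so that $u_2=(1-\eta)u$ lies in $C_c^\infty(\R^{N+1}_+)$ automatically and the separate approximation step you describe for $u_2$ becomes unnecessary.
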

\begin{proof}
	Let $u\in H^1_\mu(\R^{N+1}_+)$. By Proposition \ref{core Gaussian} and using a density argument, we can suppose without any loss of generality $u\in\mathcal C$.  Let $\eta\in C^\infty_c([0,1[)$ be a cut-off function such that $\eta(y)=1$, if $0\leq y\leq \frac 12$ and let us set
	\begin{align*}
		u=\eta u+(1-\eta) u:=u_1+u_2.
	\end{align*}
We focus, preliminarily,  on $0\leq y\leq 1$. Since $ u_1\in C^\infty_c\left( \R^N\times [0,1[\right)$, we can apply Proposition \ref{local Hardy type} thus obtaining	
	\begin{align*}
		\left\|\frac {u_1}{y^\alpha}\right\|_{L^2_c( \R^N\times [0,1[)}&\leq \frac{2 }{c+1-2\alpha}\left\|D_y u_1 \right\|_{L^2_c( \R^N\times [0,1[)}
	\end{align*}
	which, observing that $e^{-a}\leq e^{-ay^2}\leq 1$ for $y\in[0,1]$, implies
	\begin{align*}
		\left\|\frac {u_1}{y^\alpha}\right\|_{L^2_\mu( \R^N\times [0,1[)}&\leq \frac{2e^{a}}{c+1-2\alpha}\left\|D_y u_1\right\|_{L^2_\mu( \R^N\times [0,1[)}\\[1ex]
		&\leq \frac{2 e^{a}}{c+1-2\alpha}\left(\left\|D_y u\right\|_{L^2_\mu(\R^{N+1}_+)}+\left\| u\right\|_{L^2_\mu(\R^{N+1}_+)}\right).
	\end{align*}
On the other hand  since $ u_2\in C^\infty_c\left( \R^N\times [\frac 1 2,+\infty [\right)$, we have trivially 
$$\left\|\frac {u_2}{y^\alpha}\right\|_{L^2_\mu( \R^N\times [0,1[)}=\left\|\frac {u_2}{y^\alpha}\right\|_{L^2_\mu( \R^N\times [\frac 1 2,1[)}\leq{2^{\alpha}} \left\|u\right\|_{L^2_\mu(\R^{N+1}_+)}$$
which added to the previous inequality implies
\begin{align}\label{weigted Hardy Gauss eq 1}
	\left\|\frac {u}{y^\alpha}\right\|_{L^2_\mu( \R^N\times [0,1[)}
	&\leq C\left(\left\|D_y u\right\|_{L^2_\mu(\R^{N+1}_+)}+\left\| u\right\|_{L^2_\mu(\R^{N+1}_+)}\right).
\end{align}
We now focus  on $y\geq 1$ and we argue similarly. We apply  Lemma \ref{Lemma Hardy y grandi} to $ u_2$ thus obtaining	
	\begin{align*}
		\left\|u_2 y\right\|_{L^2_\mu( \R^N\times [\frac 1 2,+\infty [)}&\leq C\left( \left\|D_y u_2\right\|_{L^2_\mu(\R^{N+1}_+)}+\left\| u_2\right\|_{L^2_\mu(\R^{N+1}_+)}\right)\\[1ex]
		&\leq C\left( \left\|D_y u\right\|_{L^2_\mu(\R^{N+1}_+)}+\left\| u\right\|_{L^2_\mu(\R^{N+1}_+)}\right)
	\end{align*}
Since  we have trivially 
$$\left\|u_1y\right\|_{L^2_\mu( \R^N\times [\frac 1 2,+\infty[)}=\left\| {u_1}y\right\|_{L^2_\mu( \R^N\times [\frac 1 2,1[)}\leq \left\|u\right\|_{L^2_\mu(\R^{N+1}_+)}$$
summing up the previous inequalities yields
\begin{align}\label{weigted Hardy Gauss eq 2}
	\left\|u y\right\|_{L^2_\mu( \R^N\times [\frac 1 2,+\infty[)}
	&\leq C\left(\left\|D_y u\right\|_{L^2_\mu(\R^{N+1}_+)}+\left\| u\right\|_{L^2_\mu(\R^{N+1}_+)}\right).
\end{align}
Inequities \eqref{weigted Hardy Gauss eq 1} and \eqref{weigted Hardy Gauss eq 2} prove the required claim.
\end{proof}

We can now prove a  Rellich-Kondrachov type theorem which assures the compactness of the embedding $H^1_\mu(\R^{N+1}_+)\hookrightarrow L^2_\mu(\R^{N+1}_+)$.

\begin{teo}\label{Compactness H^1_cg}
	Let $c+1>0$. The immersion $H^1_\mu(\R^{N+1}_+)\hookrightarrow L^2_\mu(\R^{N+1}_+)$ is compact.
\end{teo}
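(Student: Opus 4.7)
The plan is to adapt the localization argument used in Propositions \ref{Compactness H^1_c} and \ref{Compactness H^1_c ver mean}, but now three different ``bad'' regions must be handled: the boundary $\{y=0\}$ where the weight is singular/degenerate, and the two ends $|x|\to\infty$ and $y\to\infty$ where the domain is unbounded. Let $\mathcal B$ denote the unit ball of $H^1_\mu(\R^{N+1}_+)$ and fix $\epsilon>0$. The goal is to exhibit a finite $\epsilon$-net for $\mathcal B$ in $L^2_\mu(\R^{N+1}_+)$ by showing that, uniformly in $u\in\mathcal B$, the $L^2_\mu$-mass outside a fixed compact box $K=B^N(0,R)\times(\delta,Y)$ is negligible, and then appealing to classical Rellich-Kondrachov on $K$.

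First, I would use Proposition \ref{Hardy type gaussian x} to control the tail in $x$: for $u\in\mathcal B$, Chebyshev gives
\begin{align*}
\int_{|x|>R}|u|^2\,d\mu\leq R^{-2}\int_{\R^{N+1}_+}|x|^2|u|^2\,d\mu\leq C R^{-2}\|u\|_{H^1_\mu(\R^{N+1}_+)}^2\leq CR^{-2},
\end{align*}
which is below $\epsilon^2$ once $R$ is large. Next, I would use Proposition \ref{Hardy type gaussian y} at both ends in $y$. On $\{y\geq 1\}$ the weight $y(1\wedge y)^{-\alpha-1}$ reduces to $y$, so
\begin{align*}
\int_{y>Y}|u|^2\,d\mu\leq Y^{-2}\int_{\{y\geq 1\}}y^2|u|^2\,d\mu\leq C Y^{-2},
\end{align*}
giving tail smallness for $Y$ large. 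On $\{0<y\leq 1\}$ the same weight reduces to $y^{-\alpha}$, so fixing any $\alpha\in\bigl(0,1\wedge\tfrac{c+1}{2}\bigr)$,
\begin{align*}
\int_{0<y<\delta}|u|^2\,d\mu\leq \delta^{2\alpha}\int_{\{y\leq 1\}}y^{-2\alpha}|u|^2\,d\mu\leq C\delta^{2\alpha},
\end{align*}
which is below $\epsilon^2$ once $\delta$ is small.

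Finally, on the bounded box $K=B^N(0,R)\times(\delta,Y)$ the density $y^c e^{-a|z|^2}$ is bounded above and below by positive constants, hence $L^2_\mu(K)=L^2(K)$ and $H^1_\mu(K)=H^1(K)$ with equivalent norms. The classical Rellich-Kondrachov theorem then yields that the restriction $\mathcal B_{|K}$ is totally bounded in $L^2_\mu(K)$, and hence admits a finite $\epsilon$-net. Combined with the three tail estimates, this produces a finite $O(\epsilon)$-net for $\mathcal B$ in $L^2_\mu(\R^{N+1}_+)$, proving total boundedness and therefore compactness. The main obstacle, and the step that required preparatory work, is the control near $y=0$: the standard Gaussian weight alone is insufficient there when $c\geq 1$, and it is precisely the improved weighted Hardy inequality of Proposition \ref{Hardy type gaussian y} which makes the argument work in the full range $c>-1$, rather than only in the Muckenhoupt range $-1<c<1$ of \cite{audrito2024nodalsetsolutionsclass}.
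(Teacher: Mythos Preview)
Your proof is correct and follows essentially the same approach as the paper: both use the weighted Hardy inequalities of Propositions \ref{Hardy type gaussian x} and \ref{Hardy type gaussian y} to show uniform smallness of the $L^2_\mu$-mass outside a compact box (near $y=0$, for large $y$, and for large $|x|$), and then invoke the classical Rellich--Kondrachov theorem on the box where $\mu$ is equivalent to Lebesgue measure. The only cosmetic difference is that the paper uses a single parameter $r$ (taking the box $B_r\times[\tfrac1r,r]$) while you introduce three independent parameters $R,Y,\delta$, and the paper's covering of the complement is organized slightly differently; neither affects the argument.
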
 
\begin{proof}
	We have to prove that the unit ball 	$\mathcal{B}$ of $H^1_\mu(\R^{N+1}_+)$ is totally bounded in $L^2_\mu$. Let us fix $\epsilon>0$ and let  $r>1$ to be chosen later. Let us  also write $B_r=\{x\in\R^N:|x|\leq r\}$ and $B_r^c=\{x\in\R^N:|x|> r\}$. 
	
	Let us consider, preliminarily, $\mathcal{B}_{\vert B_r\times [\frac 1 r,r]}$. Since in $B_r\times [\frac 1 r,r]$ the measure $\mu$ is equivalent to the Lebesgue measure then  $L^2_\mu\left(B_r\times [\frac 1 r,r]\right)=L^2\left(B_r\times [\frac 1 r,r],\,dz\right)$ and $H^1_\mu\left(B_r\times [\frac 1 r,r]\right)=H^1\left(B_r\times [\frac 1 r,r],\,dz\right)$: the compactness of $\mathcal{B}_{B_r\times [\frac 1 r,r]}$ in $L^2_\mu\left(B_r\times [\frac 1 r, r]\right)$  is therefore classical and, consequently, $\mathcal{B}_{\vert B_r\times [\frac 1, r,r]}$ is totally bounded. Thus there exists a finite number of functions $f_1,\dots, f_n\in L^2_\mu\left(B_r\times [\frac 1 r,r]\right)$ such that
	\begin{align*}
	\mathcal{B}_{\vert B_r\times [\frac 1 r,r]}\subseteq \bigcup_{i=1}^n\left\{f\in L^2_\mu\left(B_r\times [\frac 1 r,r]\right):\|f-f_i\|_{L^2_\mu\left(B_r\times [\frac 1 r,r]\right)}<\epsilon\right\}
	\end{align*}
	
Let us treat now $\mathcal{B}_{\vert \left(B_r\times [\frac 1 r,r]\right)^c}$ and let us fix, with this aim,  a sufficiently small $0<\alpha\leq 1$ such that $\alpha<\frac{c+1}2$.	By Propositions \ref{Hardy type gaussian x} and \ref{Hardy type gaussian y} we have
	\begin{align}\label{Compactness H^1_cg eq 1}
		\left\|u|x| \right\|_{L^2_\mu(\R^{N+1}_+)}+	\left\|u y\left(1\wedge y\right)^{-\alpha-1} \right\|_{L^2_\mu(\R^{N+1}_+)}&\leq C\left\| u\right\|_{H^1_\mu(\R^{N+1}_+)}, \qquad \forall u\in H^1_\mu(\R^{N+1}_+).
	\end{align}
Let $u\in\mathcal{B}$. We  distinguish the following cases. 
	\begin{itemize}
	\item[(i)] Let us consider $\mathcal{B}_{\vert \R^N\times [0,\frac 1 r]}$.  Then by \eqref{Compactness H^1_cg eq 1} we have  	
	\begin{align*}
		\|u\|_{L^2_\mu\left(\R^N\times [0,\frac 1 r]\right)}
\leq \frac{1}{r^{\alpha}}\|uy^{-\alpha}\|_{L^2_\mu\left(\R^N\times [0,\frac 1r]\right)}=\frac{1}{r^\alpha}\|uy\left(1\wedge y\right)^{-\alpha-1}\|_{L^2_\mu\left(\R^N\times [0,\frac 1 r]\right)}\leq C\frac 1{r^\alpha}.
	\end{align*}
		\item[(ii)] Let us consider $\mathcal{B}_{\vert \R^N\times [r,+\infty[}$.  Then similarly by \eqref{Compactness H^1_cg eq 1} we have  	
		\begin{align*}
			\|u\|_{L^2_\mu\left(\R^N\times [r,+\infty[\right)}
			\leq \frac 1 r\|uy\|_{L^2_\mu\left(\R^N\times [r,+\infty[\right)}= \frac 1r\|uy\left(1\wedge y\right)^{-\alpha-1}\|_{L^2_\mu\left(\R^N\times [r,+\infty[\right)}\leq C\frac 1r.
		\end{align*}
\item[(iii)] Let us consider $\mathcal{B}_{\vert B_r^c\times [\frac 1 r,r]}$. By \eqref{Compactness H^1_cg eq 1} we have  	
\begin{align*}
	\|u\|_{L^2_\mu\left(B_r^c\times [\frac 1r ,r]\right)}
	\leq \frac 1 r \|u|x|\|_{L^2_\mu\left(B_R^c\times [\frac 1r,r]\right)}\leq C\frac 1 r.
\end{align*}
\end{itemize}
The above estimates show that, taking a sufficiently large $r$, we get
\begin{align*}
		\|u\|_{L^2_\mu\left(\left(B_r\times [\frac 1 r,r]\right)^c\right)}
	\leq \epsilon 
\end{align*}
Therefore extending the functions $f_i$ by zero to the whole $\R^{N+1}_+$ we obtain
\begin{align*}
	\mathcal{B}\subseteq \bigcup_{i=1}^n\left\{f\in L^2_\mu\left(\R^{N+1}_+\right):\|f-f_i\|_{L^2_\mu\left(\R^{N+1}_+\right)}<2\epsilon\right\}
\end{align*}
which implies that $\mathcal{B}$ is totally bounded in $L^2_\mu\left(\R^{N+1}_+\right)$.
\end{proof}

As in the classical theory, we can easily deduce, from the compactness of the embedding  $H^1_\mu(\R^{N+1}_+)\hookrightarrow L^2_\mu(\R^{N+1}_+)$,  a Poincar\'{e}-type inequality for functions in $H^1_\mu(\R^{N+1}_+)$.

\begin{teo}\label{Poincare weight gauss}
	Let $c+1>0$. Then one has for some positive constant $C>0$
	\begin{align*}
		\left\|u-\overline u\right\|_{L^2_\mu(\R^{N+1}_+)}\leq  C \|\nabla u\|_{L^2_\mu (\R^{N+1}_+)},\qquad \forall u\in H^1_\mu(\R^{N+1}_+),
	\end{align*}
where $\overline u=\frac 1{\mu(\R^{N+1}_+)}\int_{\R^{N+1}_+} u\,d\mu(z)$.
\end{teo}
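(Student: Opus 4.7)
The plan is to mimic the contradiction argument used in Proposition \ref{Poincare local mean}, replacing the local compactness result with the global Rellich--Kondrachov type Theorem \ref{Compactness H^1_cg}. Since the Gaussian measure $\mu$ is \emph{finite} on $\R^{N+1}_+$, the linear functional $v\mapsto \int_{\R^{N+1}_+} v\, d\mu$ is continuous on $L^2_\mu(\R^{N+1}_+)$ by the Cauchy--Schwarz inequality, which is the ingredient that makes the contradiction argument go through globally.

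More precisely, I would argue by contradiction and assume the existence of a sequence $(u_n)_{n\in\N}\subseteq H^1_\mu(\R^{N+1}_+)$ such that
\begin{align*}
\left\|u_n-\overline{u}_n\right\|_{L^2_\mu(\R^{N+1}_+)} > n\, \|\nabla u_n\|_{L^2_\mu(\R^{N+1}_+)}.
\end{align*}
After replacing $u_n$ by $u_n-\overline{u}_n$, I may assume $\overline{u}_n=0$ (the gradient is unaffected), and after normalizing I may further assume $\|u_n\|_{L^2_\mu(\R^{N+1}_+)}=1$. Then $\|\nabla u_n\|_{L^2_\mu(\R^{N+1}_+)} < 1/n$, so $(u_n)$ is bounded in $H^1_\mu(\R^{N+1}_+)$.

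By Theorem \ref{Compactness H^1_cg}, a subsequence $(u_{n_k})$ converges strongly in $L^2_\mu(\R^{N+1}_+)$ to some $u$; since it is also bounded in $H^1_\mu(\R^{N+1}_+)$, up to a further subsequence it converges weakly in $H^1_\mu(\R^{N+1}_+)$ to the same limit $u$. Passing to the limit I obtain $\|u\|_{L^2_\mu(\R^{N+1}_+)}=1$, $u\in H^1_\mu(\R^{N+1}_+)$, and $\nabla u=0$ (since $\|\nabla u_{n_k}\|_{L^2_\mu}\to 0$). Hence $u$ is constant on $\R^{N+1}_+$.

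The final step, which is the only place where the global (as opposed to local) structure really matters, is to conclude $u\equiv 0$. Since $\mu$ is finite, Cauchy--Schwarz gives
\begin{align*}
\left|\int_{\R^{N+1}_+} (u_{n_k}-u)\, d\mu\right| \leq \mu(\R^{N+1}_+)^{1/2}\, \|u_{n_k}-u\|_{L^2_\mu(\R^{N+1}_+)} \longrightarrow 0,
\end{align*}
so $\overline{u}_{n_k}\to \overline{u}$; since $\overline{u}_{n_k}=0$ for all $k$, we deduce $\overline{u}=0$. A constant function with zero mean must vanish, contradicting $\|u\|_{L^2_\mu(\R^{N+1}_+)}=1$. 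The only point requiring care is verifying that the mean is a continuous functional in this weighted Gaussian setting, which is immediate from finiteness of $\mu(\R^{N+1}_+)$ computed at the start of Section \ref{Poincarè gaussian }.
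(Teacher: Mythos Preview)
Your argument is correct and is essentially identical to the paper's proof: both proceed by contradiction, normalize to $\overline{u}_n=0$ and $\|u_n\|_{L^2_\mu}=1$, invoke Theorem~\ref{Compactness H^1_cg} to extract a subsequence converging strongly in $L^2_\mu$ and weakly in $H^1_\mu$, and reach a contradiction from $\nabla u=0$ together with $\overline{u}=0$. The only difference is that you make explicit the continuity of the mean functional (via Cauchy--Schwarz and finiteness of $\mu$) to justify $\overline{u}=0$, which the paper leaves implicit.
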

	\begin{proof}
	The proof is analogous to the one of Proposition \ref{Poincare local mean}. We sketch only the main steps. Up to replace $u$ with $u-\overline u$, we can suppose $\overline u=0$.   Then arguing by contradiction, we can assume that there exists a sequence $\left(u_n\right)_{n\in\N}\subseteq H^1_\mu (\R^{N+1}_+)$ satisfying  
	\begin{align}\label{Poincare g1}
		\left\|u_n\right\|_{L^2_\mu (\R^{N+1}_+)}=1,\qquad \|\nabla u_n\|_{L^2_\mu (\R^{N+1}_+)}<\frac 1 n.
	\end{align} By Theorem \ref{Compactness H^1_cg}, there exists a subsequence $(u_{n_k})\subseteq (u_n)$ which converges strongly to a function $u\in L^2_\mu (\R^{N+1}_+)$  and weakly in $H^1_\mu (\R^{N+1}_+)$. The limit $u$ satisfies, clearly,  $\|u\|_{L^2_\mu (\R^{N+1}_+)}=1$ and, using \eqref{Poincare g1}, we can also deduce that $u\in H^1_\mu (\R^{N+1}_+)$ and $\nabla u=0$ which implies $u$ to be constant. Since $\overline u=0$, we must have $u=0$ which contradicts $\|u\|_{L^2_\mu (\R^{N+1}_+)}=1$. 
	
\end{proof}

\bibliography{../TexBibliografiaUnica/References}

\begin{thebibliography}{10}

\bibitem{Arendt-ExtensionProblem}
{\sc Arendt, W., {Ter Elst}, A. F.~M., and Warma, M.}
\newblock Fractional powers of sectorial operators via the
  {Dirichlet}-to-{Neumann} operator.
\newblock {\em Communications in Partial Differential Equations 43}, 1 (2018),
  1--24.

\bibitem{Vita2024schauder}
{\sc Audrito, A., Fioravanti, G., and Vita, S.}
\newblock Schauder estimates for parabolic equations with degenerate or
  singular weights, 2024.

\bibitem{audrito2024nodalsetsolutionsclass}
{\sc Audrito, A., and Terracini, S.}
\newblock On the nodal set of solutions to a class of nonlocal parabolic
  equations.
\newblock {\em Memoirs of the American Mathematical Society 301}, 1512 (2024).

\bibitem{Caffarelli-Silvestre}
{\sc Caffarelli, L., and Silvestre, L.}
\newblock An extension problem related to the fractional {Laplacian}.
\newblock {\em Communications in Partial Differential Equations 32}, 8 (2007),
  1245--1260.

\bibitem{DeGiorgi}
{\sc {De Giorgi}, E.}
\newblock Sulla differenziabilit\`{a} e l'analiticit\`{a} delle estremali degli
  integrali multipli regolari.
\newblock {\em Mem. Accad. Sci. Torino. Cl. Sci. Fis. Mat. Nat. 3}, 3 (1957),
  25--43.

\bibitem{dong2020parabolic}
{\sc Dong, H., and Phan, T.}
\newblock Parabolic and elliptic equations with singular or degenerate
  coefficients: The {Dirichlet} problem.
\newblock {\em Transaction of the American Mathematical Society 374}, 09
  (2021).

\bibitem{evans-Book-PDE}
{\sc Evans, L.~C.}
\newblock {\em Partial differential equations: Second Edition}.
\newblock American Mathematical Society, Providence, R.I., 2010.

\bibitem{Fabes-Kenig-Serapioni}
{\sc Fabes, E.~B., Kenig, C.~E., and Serapioni, R.~P.}
\newblock The local regularity of solutions of degenerate elliptic equations.
\newblock {\em Communications in Partial Differential Equations 7}, 1 (1982),
  77--116.

\bibitem{Fabes-Stroock}
{\sc Fabes, E.~B., and Stroock, D.~W.}
\newblock A new proof of moser's parabolic harnack inequality using the old
  ideas of nash.
\newblock {\em Archive for Rational Mechanics and Analysis 96\/} (1986),
  327--338.

\bibitem{Franchi-Guti-Wheeden}
{\sc Franchi, B., Guti{\'{e}}rrez, C.~E., and Wheeden, R.~L.}
\newblock Two-weight {Sobolev}-{Poincar{\'{e}}} inequalities and {Harnack}
  inequality for a class of degenerate elliptic operators.
\newblock {\em Atti della Accademia Nazionale dei Lincei, Classe di Scienze
  Fisiche, Matematiche e Naturali, Rendiconti Lincei, Matematica e Applicazioni
  9(5), n.2\/} (1994), 167--175.

\bibitem{GaleMiana-Extension}
{\sc {Gal\'{e}}, J.~E., {Miana}, P.~J., and {Stinga}, P.}
\newblock Extension problem and fractional operators: semigroups and wave
  equations.
\newblock {\em Journal of Evolution Equations 13\/} (2013), 343--368.

\bibitem{Heinonen}
{\sc {Heinonen}, J., {Kilpelainen}, T., and {Martio}, O.}
\newblock {\em Nonlinear Potential Theory of Degenerate Elliptic Equations}.
\newblock Dover Pubblications,Inc. Mineola, New York, 2006.

\bibitem{Bertoldi_Lorenzi}
{\sc Lorenzi, L., and Bertoldi, M.}
\newblock {\em Analytical Methods for Markov Semigroups (1st ed.)}.
\newblock Chapman and Hall/CRC, 2006.

\bibitem{MNS-Caffarelli}
{\sc {Metafune}, G., {Negro}, L., and {Spina}, C.}
\newblock {$L^p$} estimates for the {Caffarelli}-{Silvestre} extension
  operators.
\newblock {\em Journal of Differential Equations 316\/} (2022), 290--345.

\bibitem{MNS-Sobolev}
{\sc {Metafune}, G., {Negro}, L., and {Spina}, C.}
\newblock {Anisotropic Sobolev Spaces with Weights}.
\newblock {\em Tokyo Journal of Mathematics 46}, 2 (2023), 313 -- 337.
\newblock DOI: 10.3836/tjm/1502179386.

\bibitem{MNS-CompleteDegenerate}
{\sc {Metafune}, G., {Negro}, L., and {Spina}, C.}
\newblock A unified approach to degenerate problems in the half-space.
\newblock {\em Journal of Differential Equations 351\/} (2023), 63--99.

\bibitem{MNS-Degenerate-Half-Space}
{\sc {Metafune}, G., {Negro}, L., and {Spina}, C.}
\newblock Regularity theory for parabolic operators in the half-space with
  boundary degeneracy.
\newblock {\em Ann. Sc. Norm. Super. Pisa Cl. Sci.\/} (2024).
\newblock DOI: \url{https://doi.org/10.2422/2036-2145.202310_009}. Online
  preprint arxiv:2309.14319v2.

\bibitem{MNS-Singular-Half-Space}
{\sc {Metafune}, G., {Negro}, L., and {Spina}, C.}
\newblock Singular parabolic problems in the half-space.
\newblock {\em Studia Mathematica 277\/} (2024), 1--44.
\newblock DOI: 10.4064/sm230710-18-3.

\bibitem{Moser1}
{\sc Moser, J.}
\newblock A new proof of de giorgi's theorem concerning the regularity problem
  for elliptic differential equations.
\newblock {\em Communications on Pure and Applied Mathematics 13}, 3 (1960),
  457--468.

\bibitem{Moser2}
{\sc Moser, J.}
\newblock On harnack's theorem for elliptic differential equations.
\newblock {\em Communications on Pure and Applied Mathematics 14}, 3 (1961),
  577--591.

\bibitem{Nash}
{\sc Nash, J.}
\newblock Continuity of solutions of parabolic and elliptic equations.
\newblock {\em American Journal of Mathematics 80}, 4 (1958), 931--954.

\bibitem{Negro-AlphaDirichlet}
{\sc Negro, L.}
\newblock Singular parabolic operators in the half-space with boundary
  degeneracy: Dirichlet and oblique derivative boundary conditions, 2024.
\newblock Online preprint arXiv:2405.09540.

\bibitem{Negro-Spina-SingularKernel}
{\sc Negro, L., and Spina, C.}
\newblock Kernel bounds for parabolic operators having first-order degeneracy
  at the boundary, 2024.
\newblock arXiv:2403.01959.

\bibitem{Negro-Spina-SingularKernel-Lower}
{\sc Negro, L., and Spina, C.}
\newblock Sharp kernel bounds for parabolic operators with first order
  degeneracy, 2024.
\newblock Online preprint arXiv:2408.00031.

\bibitem{Stinga-Torrea-Extension}
{\sc Stinga, P.~R., and Torrea, J.~L.}
\newblock Extension problem and {Harnack}'s inequality for some fractional
  operators.
\newblock {\em Communications in Partial Differential Equations 35}, 11 (2010),
  2092--2122.

\end{thebibliography}
\end{document}